\documentclass[a4paper,12pt]{amsart}
\usepackage{amssymb}
\usepackage{ifthen}
\usepackage[usenames]{color}
\usepackage{graphicx}
\nonstopmode \numberwithin{equation}{section}
\newtheorem{thm}{Theorem}[section]

\newtheorem{cor}{Corollary}[section]
\newtheorem{lem}{Lemma}[section]
\newtheorem{prop}{Proposition}[section]

\newtheorem{claim}{Claim}[section]
\newtheorem{subclaim}{Subclaim}[section]
\newtheorem{conj}[equation]{Conjecture}
\newtheorem{case}{Case}[section]
\newtheorem*{mysolution}{Solution}
\newtheorem{step}{Step}[section]
\theoremstyle{definition}
\newtheorem{defn}{Definition}[section]
\newtheorem{examp}{Example}[section]
\newtheorem{prob}[equation]{Problem}
\newtheorem{ques}[equation]{Question}
\newtheorem{rem}{Remark}[section]
\newcounter {own}
\def\theown {\thesection       .\arabic{own}}

\newenvironment{pf}[1][]{%
	\vskip 3mm
	\noindent
	\ifthenelse{\equal{#1}{}}%
	{{\slshape Proof. }}%
	{{\slshape #1.} }%
}%
{\qed\bigskip}

\newcounter{alphabet}
\renewcommand{\thealphabet}{\Alph{alphabet}}

\newenvironment{Thm}[1][]{\refstepcounter{alphabet}%
	\bigskip
	\noindent
	{\bf Theorem \thealphabet}%
	\ifthenelse{\equal{#1}{}}{}{ (#1)}%
	{\bf .} \itshape
}{\vskip 8pt}

\newenvironment{Lem}[1][]{\refstepcounter{alphabet}%
	\bigskip
	\noindent
	{\bf Lemma \thealphabet}%
	{\bf .} \itshape
}{\vskip 8pt}

\def\be{\begin{equation}}
	\def\ee{\end{equation}}

\newcommand{\ben}{\begin{enumerate}}
	\newcommand{\een}{\end{enumerate}}

\newcommand{\blem}{\begin{lem}}
	\newcommand{\elem}{\end{lem}}
\newcommand{\bthm}{\begin{thm}}
	\newcommand{\ethm}{\end{thm}}
\newcommand{\bcor}{\begin{cor}}
	\newcommand{\ecor}{\end{cor}}
\newcommand{\beg}{\begin{examp}}
	\newcommand{\eeg}{\end{examp}}
\newcommand{\begs}{\begin{examples}}
	\newcommand{\eegs}{\end{examples}}
\newcommand{\bdefe}{\begin{defn}}
	\newcommand{\edefe}{\end{defn}}
\newcommand{\bprob}{\begin{prob}}
	\newcommand{\eprob}{\end{prob}}
\newcommand{\bques}{\begin{ques}}
	\newcommand{\eques}{\end{ques}}
\newcommand{\bei}{\begin{itemize}}
	\newcommand{\eei}{\end{itemize}}
\newcommand{\bcl}{\begin{claim}}
	\newcommand{\ecl}{\end{claim}}
\newcommand{\bscl}{\begin{subclaim}}
	\newcommand{\escl}{\end{subclaim}}
\newcommand{\bca}{\begin{case}}
	\newcommand{\eca}{\end{case}}
\newcommand{\bstep}{\begin{step}}
	\newcommand{\estep}{\end{step}}
\newcommand{\bsol}{\begin{mysolution}}
	\newcommand{\esol}{\end{mysolution}}
\newcommand{\bcon}{\begin{conj}}
	\newcommand{\econ}{\end{conj}}
\newcommand{\bcons}{\begin{conjs}}
	\newcommand{\econs}{\end{conjs}}
\newcommand{\bprop}{\begin{prop}}
	\newcommand{\eprop}{\end{prop}}
\newcommand{\br}{\begin{rem}}
	\newcommand{\er}{\end{rem}}
\newcommand{\brs}{\begin{rems}}
	\newcommand{\ers}{\end{rems}}
\newcommand{\bo}{\begin{obser}}
	\newcommand{\eo}{\end{obser}}
\newcommand{\bos}{\begin{obsers}}
	\newcommand{\eos}{\end{obsers}}
\newcommand{\bpf}{\begin{pf}}
	\newcommand{\epf}{\end{pf}}
\newcommand{\ba}{\begin{array}}
	\newcommand{\ea}{\end{array}}
\newcommand{\beq}{\begin{eqnarray}}
	\newcommand{\beqq}{\begin{eqnarray*}}
		\newcommand{\eeq}{\end{eqnarray}}
	\newcommand{\eeqq}{\end{eqnarray*}}

\begin{document}
	\title{Riesz--Fej\'er type Inequalities for $\alpha$-Harmonic Functions in the Unit Ball}
	
	\author{Qianyun Li}
	\address{Qianyun Li, Key Laboratory of Computing and Stochastic Mathematics (Ministry of Education), School of Mathematics and Statistics, Hunan Normal University, Changsha, Hunan 410081, P. R. China}
	\email{liqianyun@hunnu.edu.cn}
	
	\author{Shufang Luo}
	\address{Shufang Luo, Key Laboratory of Computing and Stochastic Mathematics (Ministry of Education), School of Mathematics and Statistics, Hunan Normal University, Changsha, Hunan 410081, P. R. China}
	\email{19090204850@163.com}
	
	\author{Zhihao Xu${}^{~\mathbf{*}}$}
	\address{Zhihao Xu, College of Mathematics and Statistics, Hengyang Normal University, Hengyang, Hunan 421010,  P. R. China.}
	\email{734669860@qq.com}
	
	\keywords{Invariant Laplacian equation, Poisson-Szeg\"{o} integral, Riesz--Fej\'er inequality, Schur test.\\
		$^{\mathbf{*}}$Corresponding author}
	
	\subjclass[2020]{Primary 31B05; Secondary 42B30.}
	\begin{abstract}
	In this paper, we establish Riesz--Fej\'er type inequalities for the $\alpha$-harmonic functions $f=P_{\alpha}[f^*]$   in  $\mathbb B^n$, where $f^*\in L^{p}(\mathbb{S}^{n-1})$ and $1<p<\infty$. More precisely, for $n\geq2$ and $\alpha>-1$, we prove the existence of a
	constant $\mathcal{C}_{n,p,\alpha}$ such that
	 $\int_{-1}^{1}
	 |f(r\eta)|^p(1-r^2)^{n-2}\,dr
	 \leq
	 \mathcal{C}_{n,p,\alpha}
	 \int_{\mathbb S^{n-1}}|f^*(\xi)|^p\,d\sigma(\xi)$.
	Moreover, in the range
	$\alpha>\max\left\{
	-\frac{n-1}{p},\,n-2-\frac{2(n-1)}{p}
	\right\}$,
	we determine the sharp constant explicitly. 
	 The result generalize and
	extend the corresponding results of Ahmed et al. (J. Math. Anal. Appl., 563:13, 2026), Hu et al. (Anal. Math. 51:15, 2025) and  Long (arXiv: 2410.12137).
	
	\end{abstract}
	\maketitle
	
	\makeatletter\def\thefootnote{\@arabic\c@footnote}\makeatother

	\section{Introduction  and statement of the main results}
	\subsection{Preliminaries}
	For $n\ge 2$, denote by $\mathbb{R}^{n}$  the usual real Euclidean space of dimension $n$.
	For $x=(x_{1},\ldots,x_{n})\in \mathbb{R}^{n}$, we denote the Euclidean length   of $x$ by $|x|$ and sometimes   identify each point $x$ with a column vector.
	For two column vectors $x,y\in \mathbb{R}^{n}$, the Euclidean inner product of $x$ and $y$ is given by
	$\langle x,y\rangle$.
	For $x_{0}\in \mathbb{R}^{n}$ and $r>0$, let $\mathbb{B}^{n}\left(x_{0}, r\right)=\left\{x \in \mathbb{R}^{n}:\left|x-x_{0}\right|<r\right\}$ and $\mathbb{S}^{n-1}\left(x_{0}, r\right)=\partial \mathbb{B}^{n}\left(x_{0}, r\right)$.
	For convenience, in the following, we denote the unit ball $\mathbb{B}^{n}(0,1)$ by $\mathbb{B}^{n}$, and its boundary  $\mathbb{S}^{n-1}(0,1)$ by $\mathbb{S}^{n-1}$, respectively.
	Moreover, we let $\mathbb{D}$ be unit disk in the complex plane $\mathbb{C}$.
	
	\subsubsection{The Generalized Hypergeometric Series}
	For $a\in \mathbb{R}$ and $k\in\{0,1,2,\ldots\}$, we use $(a )_{k}$ to denote the factorial function
	with $(a )_{0}=1$ ($a\not=0$) and $(a)_{k}=a (a +1) \cdots(a +k-1)$ ($k\geq 1$).
	If $a $ is neither zero nor a negative integer, then
	$$
	(a)_{k}=\frac{\Gamma(a+k) }{ \Gamma(a)} ,
	$$
	where $\Gamma$ is the Gamma function.
	Let $p$, $q=p-1$ be two positive integers and $s\in \mathbb{R}$.
	We define the generalized hypergeometric series by
	\begin{equation}\label{eq-1.1}
		{ }_{p} F_{q}\left(a_{1}, a_{2}, \ldots, a_{p} ; b_{1}, b_{2}, \ldots, b_{q} ; s\right)=\sum_{k=0}^{\infty} \frac{\left(a_{1}\right)_{k} \cdots\left(a_{p}\right)_{k}}{\left(b_{1}\right)_{k} \cdots\left(b_{q}\right)_{k}} \frac{s^{k}}{k !},
	\end{equation}
	where   $a_{i}, b_{j}\in \mathbb{R}$ ($1\leq i\leq p$, $1\leq j\leq q$) and  $b_{j}$  is neither zero nor a negative integer.
	If $\sum_{j=1}^{q}b_{j}-\sum_{i=1}^{p}a_{i}>0$, then the series (\ref{eq-1.1}) converges absolutely in $[-1,1]$ (cf. \cite[ P.~74]{rain}). By \cite[P.~60]{rain}, we know that
	\begin{equation}\label{eq-1.2}
		{ }_{2} F_{1}\left(a, b; c ; s\right)=(1-s)^{c-a-b}{ }_{2} F_{1}\left(c-a, c-b; c ; s\right)
	\end{equation}
	for any $s\in(-1,1)$. 
	If $c-a-b>0$, then it follows from  \cite[P.~ 49]{rain} that
	\begin{equation}\label{eq-1.3}
		{ }_{2} F_{1}\left(a, b; c ; 1\right)=\frac{\Gamma(c)\Gamma(c-a-b)}{\Gamma(c-a)\Gamma(c-b)}.
	\end{equation}
     The Legendre's duplication formula \cite[Theorem 1.5.1]{an1999} states for any $a>0$,
     \begin{equation}\label{eq-1.4}
     	\Gamma(2a)\Gamma(\frac{1}{2})=2^{2a-1}\Gamma(a)\Gamma(a+\frac{1}{2}).
     \end{equation}
	Besides, for any $s,t>0$, the beta function $B(s,t)$ is defined as
	$$
	B(s,t)=\frac{\Gamma(s)\Gamma(t)}{\Gamma(s+t)}.
	$$
	\subsubsection{Invariant  Laplacian  Equation}
	A mapping $f \in C^{2}\left(\mathbb{B}^{n}, \mathbb{R}^{n}\right)(n \geq 2)$ is called to be {\em $\alpha$-harmonic} if it satisfies the  (M\"{o}bius) invariant  Laplacian equation
	$$
	\Delta_{\alpha}f(x)=(1-|x|^{2})\left\{\left(1-|x|^{2}\right) \sum_{j=1}^{n} \frac{\partial^{2}f}{\partial x_{j}^{2}}+2\alpha \sum_{j=1}^{n} x_{j} \frac{\partial f}{\partial x_{j}}+\alpha(n-2-\alpha)f\right\}=0.
	$$
	In this paper, we call $\Delta_{\alpha}$ the (M\"{o}bius) invariant Laplacian, since
	\begin{equation*}
		\Delta_{\alpha}\left\{\left(\operatorname{det} D\psi (x)\right)^{\frac{n-2- \alpha}{2 n}} f(\psi(x))\right\}=\left(\operatorname{det} D\psi (x)\right)^{\frac{n-2-\alpha}{2 n}}\left(\Delta_{\alpha} f\right)(\psi(x))
	\end{equation*}
	for every  $u \in C^{2}\left(\mathbb{B}^n,\mathbb{R}^n\right)$ and for every $\psi \in \mathcal{M }(\mathbb{B}^n )$ (see \cite[Proposition 3.2]{Liu09}).
	Here  $ \mathcal{M} (\mathbb{B}^n )$  denotes the group of those M\"{o}bius transformations that map $\mathbb{B}^{n}$ onto $\mathbb{B}^{n}$.

	When $\alpha=0$ (resp. $\alpha=n-2$), the mapping $u$ is called harmonic (resp. hyperbolic harmonic).
	The properties of these two classes
	of mappings have been investigated extensively by many authors  (cf. \cite{ABR92,Bur92,CH2013, JP99, sto2016}).
	
	The operator $\Delta_{\alpha}$ is closely related to polyharmonic mappings (see \cite{AH2014, Liu21}) and to solutions of the Weinstein equation (see \cite{leu}).
	For a discussion of the fundamental properties of the operator $\Delta_{\alpha}$ in  $\mathbb{D}$, the reader is referred to (\cite{chen23,chen15,k2024,KMM2021, Long2025,Ol14}).
	For the higher-dimensional setting, see (\cite{ACKL2026,chenli2024A,chen24,Liu04, Liu09, ZHD24,zhou22}) and the references therein.

	\subsubsection{Hardy type spaces}
	
	For $p\in(0,\infty]$, the Hardy type space  $\mathcal{H}^p(\mathbb{B}^n, \mathbb{R} )$  consists of all
	measurable mappings $f:\mathbb{B}^n\rightarrow\mathbb{R} $  such that
	$H_{p}(f) < \infty$, where $H_{p}(f)= \sup_{r\in[0,1)} \left\{ M_p(r, f) \right\}$ and
	\begin{align*}
		M_p(r, f) =
		\begin{cases}
			\displaystyle \left( \int_{\mathbb{S}^{n-1}} |f(r\xi)|^p \, d\sigma(\xi) \right)^{\frac{1}{p}}, & \text{if } p \in (0, \infty), \\[6pt]
			\displaystyle \operatorname{esssup}_{\xi \in \mathbb{S}^{n-1}} \left\{ |f(r\xi)| \right\}, & \text{if } p = \infty.
		\end{cases}
	\end{align*}
	Here and hereafter, $d \sigma$ denotes the normalized surface measure
	on   $\mathbb{S}^{n-1}$  so that $\sigma(\mathbb{S}^{n-1}) = 1$ and   $\omega_{n-1}=2\pi^{n/2}/\Gamma(n/2)$ is the $(n-1)$-dimensional surface area of $\mathbb{S}^{n-1}$.
	The norm in   $\mathcal{H}^p(\mathbb{B}^n, \mathbb{R} )$ is denoted by $\| \cdot\|_{\mathcal{H}^p(\mathbb{B}^n, \mathbb{R} )}$,
	which is defined by
	$$
	\| f \|_{\mathcal{H}^p(\mathbb{B}^n, \mathbb{R} )} = H_{p}(f)
	$$
	for every $f\in \mathcal{H}^p(\mathbb{B}^n, \mathbb{R} )$. In particular, we let $H^p(\mathbb{D})$ and $h^p(\mathbb{D})$ denote the Hardy space of holomorphic functions and the harmonic Hardy space on the unit disk $\mathbb{D}$, respectively.

	For $p\in(0,\infty]$, we use $L^{p}\left(\mathbb{S}^{n-1},\mathbb{R} \right)$ to stand for the space of all measurable mappings  $f:\mathbb{S}^{n-1}\rightarrow\mathbb{R} $  such that $M_{p}(1,f)<\infty$. The norm in $L^{p}\left(\mathbb{S}^{n-1},\mathbb{R} \right)$ is denoted by  $\|\cdot\|_{L^{p}\left(\mathbb{S}^{n-1},\mathbb{R} \right)}$,
	which is defined by
	$$
	\|f\|_{L^{p}\left(\mathbb{S}^{n-1},\mathbb{R} \right)}=M_{p}(1,f)
	$$
	for $f\in L^{p}\left(\mathbb{S}^{n-1},\mathbb{R} \right)$.

	If $\phi\in L^{p}(\mathbb{S}^{n-1},\mathbb{R})$ with $p\in[1,\infty]$,  we define the invariant Poisson integral or   Poisson-Szeg\"{o} integral
	of $\phi$ in $\mathbb{B}^{n}$ by $P_{\alpha}[\phi]$, where
	\begin{equation*}
		P_{\alpha}[\phi](x)=\int_{\mathbb{S}^{n-1}} P_{\alpha}(x, \zeta) \phi(\zeta) d \sigma(\zeta),
	\end{equation*}
	
	\begin{equation*}
		P_{\alpha}(x, \zeta)=C_{n, \alpha} \frac{\left(1-|x|^{2}\right)^{1+ \alpha}}{|x-\zeta|^{n+ \alpha}}
		\;\;\text{and}\;\;C_{n, \alpha}=\frac{\Gamma\left(\frac{n+\alpha}{2}\right) \Gamma(1+\frac{\alpha}{2})}{\Gamma\left(\frac{n}{2}\right) \Gamma(1+ \alpha)}
	\end{equation*}
	(cf. \cite[Section 1]{li24}). Specifically, we know that $P_{h}(x, \zeta):=P_{n-2}(x, \zeta)$ is Poisson-Szeg\"{o} integral about hyperbolic harmonic.
	
	In \cite{Liu04}, Liu and Peng  investigated  the solvability of the
	Dirichlet problem associated with the (M\"{o}bius) invariant Laplacian
	\begin{equation}\label{eq-1.5}
		\left\{\begin{array}{ll}
			\Delta_{\alpha} f(x)=0, & \text{ if} \;x\in\mathbb{B}^{n}, \\
			f(\zeta)=\phi(\zeta),& \text{ if}\; \zeta\in\mathbb{S}^{n-1},
		\end{array}\right.
	\end{equation}
	where $\phi\in C ( \mathbb{S}^{n-1} ,\mathbb{ R})$.
	They showed that the Dirichlet problem \eqref{eq-1.5} has a solution if and only if $\alpha>-1$ (see \cite[Theorem 2.4]{Liu04}).
	In this case, the solution is unique and
	$$
	f(x)= P_{\alpha}[\phi](x).	
	$$
	
	\subsection{Main results}
	
	The classical Riesz--Fej\'er inequality \cite{FR1921} states that if $ f\in H^p(\mathbb{D})$ for $0 < p < \infty$, then
	\[
	\int_{-1}^{1} |f(x)|^p dx \leq \frac12 \int_{0}^{2\pi} |f(e^{i\theta})|^p d\theta,
	\]
	with the constant $\frac12$ is sharp constant.
	
	In 2020, Kayumov, Ponnusamy and Kaliraj \cite{KMM2021} initiated the study of the analogous inequality for the harmonic Hardy space $h^p(\mathbb{D})$. They proved that for $1 < p < \infty$ and $f \in h^p(\mathbb{D})$,
	\[
	\int_{-1}^{1} |f(re^{is})|^p dr \leq C_p \int_{0}^{2\pi} |f(e^{i\theta})|^p d\theta,
	\]
	with $C_p = 1$ for $p \geq 2$ and $C_p = 1/\bigl(2\cos^p(\pi/(2p))\bigr)$ for $1 < p \leq 2$. 
	Melentijevi\'c and Bo\v zin \cite{MB2021} later showed that the same constant $1/\bigl(2\cos^p(\pi/(2p))\bigr)$ is sharp for the entire range $1 < p < \infty$.
	
	Recently,  Ahmed and Khalfallah \cite{AK2026} established a Riesz--Fej\'er type inequality for hyperbolic harmonic functions in the unit ball $\mathbb{B}^{n}$. They showed
	the following:
	
	\begin{Thm}{\rm (\cite[Theorem 1.1]{AK2026})}
		Let $n \geq 2$ and $1 < p < \infty$. Let $f=P_{h}[f^*]$ with $f^*\in L^{p}(\mathbb{S}^{n-1},\mathbb{ R})$. Then, for every $\eta \in \mathbb{S}^{n-1}$,
		\begin{equation}\label{eq-1.6}
			\int_{-1}^{1} |f(r\eta)|^p (1-r^2)^{n-2} dr \leq \mathcal{C}_{n,p} \int_{\mathbb{S}^{n-1}} |f^*(\xi)|^p d\sigma(\xi),
		\end{equation}
		where
		$$
		\mathcal{C}_{n,p}=2^{(p+1)(n-2)} \left(\frac{\omega_{n-2}}{\omega_{n-1}}\right)^{p-1}B\left( \frac{(n-1)(p-1)}{2p},\frac{(n-1)(p+1)}{2p} \right)^p.
		$$
		The constant $\mathcal{C}_{n,p}$ is sharp. 
	\end{Thm}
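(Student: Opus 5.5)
The plan is to treat the left-hand side of \eqref{eq-1.6} as the $p$-th power of the norm of an integral operator $T:L^p(\mathbb{S}^{n-1})\to L^p([-1,1],(1-r^2)^{n-2}dr)$,
$$
Tf^*(r)=\int_{\mathbb{S}^{n-1}}P_h(r\eta,\xi)f^*(\xi)\,d\sigma(\xi),
$$
and to bound this norm by a weighted Schur test. We may take $\eta=e_n$ by rotation invariance. Writing $\xi=(\sqrt{1-t^2}\,\xi',t)$ with $t=\langle\xi,\eta\rangle$, the kernel becomes
$$
P_h(r\eta,\xi)=C_{n,n-2}\frac{(1-r^2)^{n-1}}{(1-2rt+r^2)^{n-1}},
$$
which depends only on $t$. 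By Jensen's inequality on each slice $\{\langle \xi,\eta\rangle=t\}$, it suffices to treat zonal $f^*(\xi)=g(t)$. Then $d\sigma=\frac{\omega_{n-2}}{\omega_{n-1}}(1-t^2)^{\frac{n-3}{2}}dt$, so the problem reduces to a one-dimensional operator from $L^p([-1,1],(1-t^2)^{\frac{n-3}{2}}dt)$ to $L^p([-1,1],(1-r^2)^{n-2}dr)$. Both the kernel and the weights are homogeneous of degree $0$ under the hyperbolic substitutions $r=\tanh a$ and $t=\tanh b$ (or $t=\cos\theta$ combined with $r=\frac{1-s}{1+s}$). After this change of variables the operator should become a convolution on $\mathbb{R}$, or a Hardy–Hilbert type operator homogeneous of degree $-1$ on $(0,\infty)$.

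Next, apply Hölder's inequality with test functions $h_1(t)=(1-t^2)^{-a}$ on the boundary side and $h_2(r)=(1-r^2)^{-b}$ on the $r$ side, with exponents chosen at the critical values dictated by homogeneity. These should be $a=\frac{n-1}{2p}$ up to shifts by the weights, which is where the arguments $\frac{(n-1)(p-1)}{2p}$ and $\frac{(n-1)(p+1)}{2p}$ of the Beta function come from. This step requires two integral identities. The first is
$$
\int_{-1}^1\frac{(1-r^2)^{\,n-1-\text{shift}}}{(1-2rt+r^2)^{n-1}}\,dr \quad\text{for fixed } t.
$$
The second is the corresponding $t$-integral for fixed $r$. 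Each should be evaluated via the hypergeometric representation \eqref{eq-1.1}, the transformation \eqref{eq-1.2}, Gauss's formula \eqref{eq-1.3}, and the duplication formula \eqref{eq-1.4}. Ideally both integrals come out as the same constant times $h_1^{p'}$ and $h_2^p$, which gives the bound $\mathcal{C}_{n,p}$ with the stated factor $2^{(p+1)(n-2)}(\omega_{n-2}/\omega_{n-1})^{p-1}$.

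For sharpness, test against $f^*_\varepsilon(\xi)=(1-\langle\xi,\eta\rangle^2)^{-\frac{n-1}{2p}+\varepsilon}$, or a one-sided version concentrating near $\pm\eta$ if that matches the extremal profile. As $\varepsilon\to0^+$, both sides blow up like $1/\varepsilon$, and the ratio of the leading terms should tend to $\mathcal{C}_{n,p}$. The usual argument extracts this limit by dominated convergence after the hyperbolic substitution turns the operator into a convolution.

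The main obstacle I expect is exact evaluation of the Schur integrals. Unlike the $\alpha=0$ case, the hyperbolic kernel has exponent $n-1$ rather than $n/2$, so the Schur integrals are not exactly homogeneous for general $t$. My first attempt will be to show that each integral is a ${}_2F_1$ that is monotone in $t$ (respectively $r$), with its supremum at the endpoint $t\to\pm1$, where Gauss's formula yields the Beta value. This makes the Schur test give the constant as a supremum rather than an identity. Sharpness then requires the test functions to concentrate near the boundary points $\pm\eta$, where that supremum is attained.
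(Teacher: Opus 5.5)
Your overall architecture matches the paper's, which obtains this statement as the case $\alpha=n-2$ of Theorem~\ref{thm-1.1} (there $C_{n,n-2}=1$). The shared pieces are:
\begin{itemize}
\item a Schur test with $h(\xi)=(1-\langle\xi,\eta\rangle^2)^{-\frac{n-1}{2p}}$;
\item the angular integral $Th(r)$, computed via Lemma~\ref{Lem-B} and \eqref{eq-1.2} as a constant times $(1-r^2)^{-\frac{n-1}{p}}\,{}_2F_1\bigl(-\frac{n-2}{2}-\frac{n-1}{2p},-\frac{n-1}{p};\frac n2-\frac{n-1}{2p};r^2\bigr)$, which is increasing by Lemma~\ref{Lem-B-2} and evaluated as $r\to1$ by \eqref{eq-1.3}--\eqref{eq-1.4};
\item sharpness via $f_\varepsilon$.
\end{itemize}

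\textbf{The gap.} The other half of the Schur test is the radial integral
\[
J(\theta)=\int_{-1}^{1}\frac{(1-r^2)^{n-2+\frac{n-1}{p}}}{(1-2r\cos\theta+r^2)^{n-1}}\,dr ,
\]
and you need $J(\theta)\le 2^{n-3+\frac{n-1}{p}}B\bigl(\frac{(n-1)(p-1)}{2p},\frac{(n-1)(p+1)}{2p}\bigr)(\sin\theta)^{-\frac{(n-1)(p-1)}{p}}$. Your plan to write $J$ as a ${}_2F_1$ in $t$ and apply a monotonicity lemma does not go through. Lemma~\ref{Lem-B} integrates over the angle, not the radius, and $J$ is in general not a Gauss function of $t^2$. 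Already for $n=p=2$ one computes $(\sin\theta)^{1/2}J(\theta)=\pi\bigl(\sqrt{1+\sin\theta}-\sqrt{\sin\theta}\bigr)$. Its expansion at $t^2=1$ contains the exponents $0,\frac14,\frac12$, which are distinct mod $\mathbb Z$, whereas $(1-t^2)^{\kappa}\,{}_2F_1(a,b;c;t^2)$ admits at most two exponent classes there. So the estimate that produces the second Beta factor in $\mathcal C_{n,p}$ is left unproved.

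\textbf{The missing idea} is Lemma~\ref{lem-2.1}. The substitution $\frac{1+r}{1-r}=y\cot\frac\theta2$, followed by $y=\tan x$, factors out the singular power exactly:
\[
J(\theta)=2^{n-2+\frac{n-1}{p}}(\sin\theta)^{-\frac{(n-1)(p-1)}{p}}\mathcal G(\theta),\qquad
\mathcal G(\theta)=\int_0^{\pi/2}\frac{(\sin x\cos x)^{n-2+\frac{n-1}{p}}}{\sin^{\beta}\bigl(x+\frac\theta2\bigr)}\,dx,\quad \beta=\frac{2(n-1)}{p}.
\]
Since $\partial_\theta^2\sin^{-\beta}(x+\frac\theta2)=\frac\beta4\bigl(1+\beta\cos^2(x+\frac\theta2)\bigr)\sin^{-\beta-2}(x+\frac\theta2)\ge0$, $\mathcal G$ is convex on $(0,\pi)$. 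Both endpoint limits equal $\frac12B\bigl(\frac{(n-1)(p-1)}{2p},\frac{(n-1)(p+1)}{2p}\bigr)$, so $\mathcal G$ is bounded by that value. The supremum does occur at $t\to\pm1$, as you guessed, but the reason is convexity, not hypergeometric monotonicity.

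\textbf{A smaller point on sharpness.} Your convolution heuristic is not exact. With $r=\tanh a$ and $t=\tanh b$ the kernel becomes $\cosh^{n-1}b/\cosh^{n-1}(2a-b)$, but the weights break translation invariance. So do not rely on dominated convergence for a convolution. Instead use the exact ${}_2F_1$ formula for $Tf_\varepsilon$, which is your $Th$ computation with $\frac{n-1}{p}$ replaced by $\frac{n-1}{p}-\varepsilon$. Gauss's formula then gives the boundary limit of $(1-r^2)^{\frac{n-1}{p}-\varepsilon}Tf_\varepsilon(r)$. Compare this with $\int^1(1-r^2)^{-1+p\varepsilon}dr\sim\frac1{2p\varepsilon}$ and $\|f_\varepsilon\|_p^p\sim\frac{\omega_{n-2}}{\omega_{n-1}}\frac2{p\varepsilon}$, as the paper does.
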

	When $n = 2$, obviously, hyperbolic harmonic mappings coincide with harmonic mappings and $\mathcal{C}_{2,p}=\frac{\pi}{\cos^p(\pi/(2p))}$. Since $d\sigma(e^{i\theta})=d\theta/(2\pi)$,   \eqref{eq-1.6} corresponds to the sharp result due to Melentijevi\'c and Bo\v zin.
	
	For the  Riesz--Fej\'er inequality of $\alpha$-harmonic functions in $\mathbb{D}$, Long \cite{Long2024} obtained an existence result for $-1<\alpha\leq0$ and established an asymptotically sharp result as $\alpha\to0$, while Hu et al. \cite{HFS2025} obtained an existence result for $\alpha\geq0$ and also established an asymptotically sharp result as $\alpha\to0$.

	The main aim of this paper is to generalize  \cite{HFS2025,Long2024}  to the higher-dimensional case. Our results are as follows.
	
	\begin{thm}\label{thm-1.1}
		Let $n\geq 2$, $1<p<\infty$ and $\alpha>-1$. If $f=P_{\alpha}[f^*]$  in $\mathbb{B}^{n}$  with $f^*\in L^p(\mathbb S^{n-1},\mathbb{R})$,
		then there exists a  positive constant  $\mathcal{C}_{n,p,\alpha}$ such that for every $\eta\in\mathbb S^{n-1}$,
		$$
		\int_{-1}^{1}
		|f(r\eta)|^p(1-r^2)^{n-2}\,dr
		\leq\mathcal{C}_{n,p,\alpha}
		\int_{\mathbb S^{n-1}}|f^*(\xi)|^p\,d\sigma(\xi).
		$$
		Moreover, if
		$$
		\alpha>\max\left\{
		-\frac{n-1}{p},
		\,n-2-\frac{2(n-1)}{p}
		\right\},
		$$
		then the sharp constant is
		$$
		\mathcal{C}_{n,p,\alpha}
		=2^{p\alpha+n-2}	C_{n,\alpha}^{p}
		\left(\frac{\omega_{n-2}}{\omega_{n-1}}\right)^{p-1}\left[
		B\left(\frac{(n-1)(p-1)}{2p},
		\frac{\alpha+1+\frac{n-1}{p}}{2}\right)
		\right]^p.
		$$
	\end{thm}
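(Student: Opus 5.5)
The plan is to view the map $f^*\mapsto f(r\eta)$ as an integral operator
$$
Tf^*(r)=\int_{\mathbb S^{n-1}}P_\alpha(r\eta,\xi)f^*(\xi)\,d\sigma(\xi),\qquad r\in(-1,1),
$$
from $L^p(\mathbb S^{n-1},d\sigma)$ into $L^p((-1,1),(1-r^2)^{n-2}dr)$. I would bound its norm by a weighted Schur test (Hölder's inequality with auxiliary power weights), and then show the resulting bound is attained asymptotically.

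By rotation invariance of $d\sigma$ we may assume $\eta=e_n$ and write $t=\langle\xi,\eta\rangle$. Then $|r\eta-\xi|^2=1+r^2-2rt$. Integrals of zonal functions reduce through the slice formula
$$
\int_{\mathbb S^{n-1}}g(\langle\xi,\eta\rangle)\,d\sigma(\xi)=\frac{\omega_{n-2}}{\omega_{n-1}}\int_{-1}^1 g(t)(1-t^2)^{\frac{n-3}{2}}dt.
$$
This accounts for the factor $(\omega_{n-2}/\omega_{n-1})^{p-1}$ in the constant.

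\textbf{Upper bound.} I would apply Hölder as
$$
|Tf^*(r)|^p\le\Big(\int P_\alpha(r\eta,\xi)h(\xi)^{p'}d\sigma\Big)^{p/p'}\int P_\alpha(r\eta,\xi)h(\xi)^{-p}|f^*(\xi)|^pd\sigma,
$$
with the weight $h(\xi)=(1-t^2)^{-a}$. Next, multiply by $(1-r^2)^{n-2}$ and integrate in $r$, using Fubini.

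Two estimates are then required:
\begin{itemize}
\item[(i)] $\int P_\alpha(r\eta,\xi)h^{p'}d\sigma\le A\,(1-r^2)^{-b}$;
\item[(ii)] $\int_{-1}^1 P_\alpha(r\eta,\xi)(1-r^2)^{n-2-bp/p'}dr\le B\,h(\xi)^p$.
\end{itemize}
The exponents should be chosen as $a=\frac{n-1}{2p}$ (up to a parameter $\epsilon$ that is later optimized) and $b$ matched so that both sides scale alike. Each one-dimensional integral is evaluated by expanding $(1+r^2-2rt)^{-(n+\alpha)/2}$. For $r\ge0$ I would use the substitution $t\mapsto$ a Möbius-type variable in which the kernel becomes homogeneous. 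Equivalently, I would write the integral as a ${}_2F_1$ in $r^2$, apply Euler's transformation \eqref{eq-1.2}, and bound it by its value at $1$ via Gauss's formula \eqref{eq-1.3} together with monotonicity in $r$.

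With the choice of $a$ above, the $t$-integral produces $B\big(\frac{(n-1)(p-1)}{2p},\cdot\big)$ and the $r$-integral produces $B\big(\cdot,\frac{\alpha+1+\frac{n-1}{p}}{2}\big)$. The duplication formula \eqref{eq-1.4} is then used to combine the Gamma factors with $C_{n,\alpha}$ and the power $2^{p\alpha+n-2}$.

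The conditions $\alpha>-\frac{n-1}{p}$ and $\alpha>n-2-\frac{2(n-1)}{p}$ are precisely what make the relevant ${}_2F_1$ parameters satisfy $c-a-b\ge0$ with nonnegative coefficients. Under them the supremum over $r$ is reached at the endpoint, so $AB$ is exactly the claimed constant.

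\textbf{Existence for all $\alpha>-1$.} Outside this range, the same Schur test with a non-optimal exponent $a$ (any $a$ making both integrals converge, which is possible whenever $\alpha>-1$) gives some finite constant. Alternatively, one can use the pointwise bound $|x-\xi|\ge 1-|x|$ to compare with a range of $\alpha$ already treated.

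\textbf{Sharpness.} I would test the inequality on $f_\epsilon^*(\xi)=(1-t^2)^{-\frac{n-1}{2p}+\epsilon}$, which lies in $L^p$ for $\epsilon>0$, and let $\epsilon\to0^+$. Then $\|f_\epsilon^*\|_p^p\sim\frac{\omega_{n-2}}{\omega_{n-1}}\cdot\frac{c}{\epsilon}$. One checks that $Tf_\epsilon^*(r)\approx K(1-r^2)^{-\frac{n-1}{2p}+\epsilon}$ near $r=\pm1$, where $K$ is the constant from (i). This makes the left side $\sim 2K^p\cdot\frac{c'}{\epsilon}$, and the ratio tends to $AB$.

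\textbf{Main obstacle.} I expect the hardest step to be the exact evaluation and monotonicity of the hypergeometric expressions in (i) and (ii): identifying that the supremum in $r$ is the boundary limit requires the stated restriction on $\alpha$. I would first try to get the needed monotonicity from positivity of the series coefficients after applying \eqref{eq-1.2}.
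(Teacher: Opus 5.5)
\textbf{There is a genuine gap in estimate (ii).} Your outer structure matches the paper's: a Schur test whose weight satisfies $h^{p'}=(1-t^2)^{-\frac{n-1}{2p}}$; estimate (i) via the $\theta$-integral formula, Euler's transformation \eqref{eq-1.2}, Gauss's formula \eqref{eq-1.3} and monotonicity in $r$; and the test functions $(1-t^2)^{-\frac{n-1}{2p}+\epsilon}$.

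The problem is (ii). Up to the factor $C_{n,\alpha}$, it asks for
\[
J(\theta):=\int_{-1}^{1}\frac{(1-r^2)^{\alpha+\frac{n-1}{p}}}{(1-2r\cos\theta+r^2)^{\frac{n+\alpha}{2}}}\,dr\le 2^{\alpha+\frac{n-1}{p}-1}B\Big(\frac{(n-1)(p-1)}{2p},\frac{\alpha+1+\frac{n-1}{p}}{2}\Big)(\sin\theta)^{-\frac{(n-1)(p-1)}{p}}
\]
for every $\theta\in(0,\pi)$. The constant on the right is the limit of $(\sin\theta)^{\frac{(n-1)(p-1)}{p}}J(\theta)$ as $\theta\to0^+$, so you must show this normalized quantity is largest at the endpoints.

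Your recipe (a ${}_2F_1$ in $r^2$, Euler, Gauss, monotonicity in $r$) does not address this. Here $r$ is integrated out, the extremal problem is in $\theta$, and nothing in your plan shows that the supremum over $\theta$ is a boundary value.

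The missing ingredient is Lemma \ref{lem-2.1}. The substitution $\frac{1+r}{1-r}=y\cot\frac{\theta}{2}$, followed by $y=\tan x$, gives $J(\theta)=2^{\alpha+\frac{n-1}{p}}(\sin\theta)^{-\frac{(n-1)(p-1)}{p}}\mathcal G(\theta)$, where
\[
\mathcal G(\theta)=\int_0^{\pi/2}(\sin x\cos x)^{\alpha+\frac{n-1}{p}}\sin^{-\beta}\Big(x+\frac{\theta}{2}\Big)\,dx,\qquad \beta=\alpha-n+2+\frac{2(n-1)}{p}.
\]
For $\beta\ge0$ the integrand is convex in $\theta$, and $\mathcal G(0^+)=\mathcal G(\pi^-)$ by the symmetry of the Beta function. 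Hence $\mathcal G\le\mathcal G(0^+)$.

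This is exactly where the hypothesis $\alpha>n-2-\frac{2(n-1)}{p}$ is indispensable. For $\beta<0$ the $\theta$-derivative of the integrand at $\theta=0$ is positive, so $\mathcal G(\theta)>\mathcal G(0^+)$ for small $\theta$. The Schur bound then exceeds the test-function lower bound.

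So your claim that both conditions on $\alpha$ come from the hypergeometric function in (i) is incomplete. In (i) they only fix the signs of the upper parameters $a_1=-\frac12(\alpha+\frac{n-1}{p})$ and $a_2=\frac12(n-2-\alpha-\frac{2(n-1)}{p})$. Note that $c-a_1-a_2=\alpha+1+\frac{n-1}{p}>0$ holds for every $\alpha>-1$, so it is not what the conditions secure.

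\textbf{Smaller points that also need repair.}

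Hölder gives the constant $A^{p-1}B$, not $AB$.

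To get $B(\frac{(n-1)(p-1)}{2p},\cdot)$ from the $t$-integral you need $a=\frac{(n-1)(p-1)}{2p^2}$ and $b=\frac{n-1}{p}$. With $a=\frac{n-1}{2p}$ taken literally, $h^{p'}$ is not even integrable on $\mathbb S^{n-1}$ for $p\le2$.

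After Euler's transformation, the coefficients of ${}_2F_1(a_1,a_2;c;\cdot)$ with $a_1,a_2<0$ need not be nonnegative. For example, $n=p=2$ with $\alpha\in(1,\frac32)$ gives $a_1\in(-1,0)$ and $a_2\in(-2,-1)$, and then every coefficient of order $k\ge2$ is negative. Monotonicity comes instead from $F'=\frac{a_1a_2}{c}\,{}_2F_1(a_1+1,a_2+1;c+1;\cdot)$ together with Euler's transformation of the latter, whose new upper parameters $c-a_1$ and $c-a_2$ are positive.

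In the sharpness step, $Tf^*_\epsilon(r)\sim K(1-r^2)^{-\frac{n-1}{p}+2\epsilon}$. With your exponent $-\frac{n-1}{2p}+\epsilon$, the left-hand side would stay bounded as $\epsilon\to0^+$ and the ratio would tend to $0$.

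Finally, $|x-\xi|\ge1-|x|$ yields $P_{\alpha'}\le2^{\alpha'-\alpha}\frac{C_{n,\alpha'}}{C_{n,\alpha}}P_\alpha$ only for $\alpha'>\alpha$. That is the wrong direction for reaching the uncovered small-$\alpha$ range. Existence for all $\alpha>-1$ follows instead from two facts:
\begin{itemize}
\item in (i), the transformed ${}_2F_1$ is bounded on $[0,1]$;
\item in (ii), $\sin^{-\beta}(x+\frac\theta2)\le1$ when $\beta<0$, and the convexity argument applies when $\beta\ge0$.
\end{itemize}
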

	
	\begin{rem}
		Taking $\alpha=n-2$, it is easy to see that $\alpha>
		\max\left\{-\frac{n-1}{p},
		\,n-2-\frac{2(n-1)}{p}\right\}$. Substitute $\alpha = n-2$ into $\mathcal{C}_{n,p,\alpha}$, then yields $\mathcal{C}_{n,p}$. When 
		$n=2$, our results improve upon those of Long \cite[Theorem 1.6 ]{Long2024} and Hu et al. \cite[Theorem 1.1]{HFS2025}.
	\end{rem}
	
	The rest of this paper is organized as follows.
	In Sections  \ref{Sec-2}, we recall some necessary terminology and known results and prove the required lemma.
	In  Section \ref{Sec-3},  we give the proof of Theorem \ref{thm-1.1}.

	\section{Preliminaries}\label{Sec-2}
In this section, we introduce some
	necessary terminology. Then, we prove some auxiliary results.

	\begin{Lem}{\rm (\cite[P. 55]{MOS1966})}\label{Lem-B}
		Let $\mu>0$, $\nu\in\mathbb{R}$, and $|r|<1$. Then
		\begin{equation*}
			\int_{0}^{\pi}
			\frac{\sin^{2\mu-1}\theta}
			{\bigl(1-2r\cos\theta+r^{2}\bigr)^{\nu}}
			\,d\theta
			=
			B\left(\mu,\frac12\right)
			{}_2F_1\left(
			\nu,\nu-\mu+\frac12;\mu+\frac12;r^{2}
			\right).
		\end{equation*}
	\end{Lem}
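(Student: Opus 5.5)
Since the integrand is unchanged under $\theta\mapsto\pi-\theta$ combined with $r\mapsto -r$, both sides are even in $r$, so I may assume $0\le r<1$. The plan is to expand the kernel in powers of $\cos\theta$, integrate term by term against $\sin^{2\mu-1}\theta$, and then convert the resulting hypergeometric series in $4r^2/(1+r^2)^2$ into one in $r^2$ by a quadratic transformation.

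\emph{Binomial expansion.} Write
$$1-2r\cos\theta+r^2=(1+r^2)(1-z\cos\theta),\qquad z=\frac{2r}{1+r^2}\in[0,1).$$
Then $(1-z\cos\theta)^{-\nu}=\sum_{k\ge0}\frac{(\nu)_k}{k!}z^k\cos^k\theta$. Since $|z\cos\theta|\le z<1$, the series $\sum_k \frac{|(\nu)_k|}{k!}z^k$ dominates it uniformly. Because $\mu>0$, the weight $\sin^{2\mu-1}\theta$ is integrable on $(0,\pi)$, so termwise integration is justified.

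\emph{Termwise integration.} Odd powers of $\cos\theta$ integrate to zero by the symmetry $\theta\mapsto\pi-\theta$. For even powers,
$$\int_0^\pi\sin^{2\mu-1}\theta\cos^{2j}\theta\,d\theta=B\Bigl(\mu,j+\tfrac12\Bigr)=B\Bigl(\mu,\tfrac12\Bigr)\frac{(1/2)_j}{(\mu+1/2)_j}.$$

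\emph{Summing the series.} Use the duplication identities $(\nu)_{2j}=4^j(\nu/2)_j((\nu+1)/2)_j$ and $(2j)!=4^j j!\,(1/2)_j$. The factors $(1/2)_j$ cancel, giving
$$\int_0^\pi\frac{\sin^{2\mu-1}\theta}{(1-2r\cos\theta+r^2)^\nu}\,d\theta=B\Bigl(\mu,\tfrac12\Bigr)(1+r^2)^{-\nu}\,{}_2F_1\Bigl(\tfrac\nu2,\tfrac{\nu+1}2;\mu+\tfrac12;\tfrac{4r^2}{(1+r^2)^2}\Bigr).$$

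\emph{Quadratic transformation.} The last step uses the classical identity
$${}_2F_1\Bigl(a,a+\tfrac12;c;\tfrac{4s}{(1+s)^2}\Bigr)=(1+s)^{2a}\,{}_2F_1(2a,2a-c+1;c;s),\qquad 0\le s<1.$$
With $a=\nu/2$, $c=\mu+\tfrac12$ and $s=r^2$, the prefactor $(1+r^2)^{\nu}$ cancels $(1+r^2)^{-\nu}$. The second parameter becomes $\nu-\mu+\tfrac12$, which is exactly the claimed formula.

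\emph{Main obstacle.} The only real difficulty is justifying this quadratic transformation. I would cite it from Rainville or Andrews--Askey--Roy, where it appears alongside \eqref{eq-1.2}. If a self-contained argument is wanted, I would check that both sides satisfy the same hypergeometric differential equation in $s$, are analytic at $s=0$, and take the value $1$ there. Since the exponents at $0$ are $0$ and $1-c$, and $c=\mu+\tfrac12>\tfrac12$, the analytic solution with value $1$ is unique; if $1-c$ is a nonpositive integer, that solution is analytic anyway and the uniqueness argument adapts.

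A fallback that avoids quadratic transformations is to factor the kernel as $(1-re^{i\theta})^{-\nu}(1-re^{-i\theta})^{-\nu}$ and integrate the double series. This route is messier, since it requires the moments $\int_0^\pi\sin^{2\mu-1}\theta\cos(m\theta)\,d\theta$, so I would use it only if the transformation route needed to be avoided.
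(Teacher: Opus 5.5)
Your argument is correct. It necessarily takes a different route from the paper, which gives no proof: the identity is quoted from the Magnus--Oberhettinger--Soni handbook and used as a black box, with $\mu=\frac{(n-1)(p-1)}{2p}$ and $\nu=\frac{n+\alpha}{2}$.

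Each step of your derivation checks out:
\begin{itemize}
\item The binomial series in $z=\frac{2r}{1+r^2}$ is dominated by $\sum_k \frac{|(\nu)_k|}{k!}z^k<\infty$, integrated against the integrable weight $\sin^{2\mu-1}\theta$. This is exactly where $\mu>0$ and $|r|<1$ are used.
\item The even moments are $B(\mu,j+\frac{1}{2})=B(\mu,\frac{1}{2})\frac{(1/2)_j}{(\mu+1/2)_j}$.
\item The identities $(\nu)_{2j}=4^j(\frac{\nu}{2})_j(\frac{\nu+1}{2})_j$ and $(2j)!=4^j j!\,(\frac{1}{2})_j$ produce ${}_2F_1(\frac{\nu}{2},\frac{\nu+1}{2};\mu+\frac{1}{2};\frac{4r^2}{(1+r^2)^2})$.
\item Your quadratic transformation is the classical ${}_2F_1(A,B;A-B+1;s)=(1+s)^{-A}\,{}_2F_1(\frac{A}{2},\frac{A+1}{2};A-B+1;\frac{4s}{(1+s)^2})$ with $A=\nu$ and $B=\nu-\mu+\frac{1}{2}$.
\end{itemize}
As a sanity check, $\mu=\frac{1}{2}$, $\nu=1$ gives $\frac{\pi}{1-r^2}$ on both sides.

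Your route buys transparency. It shows where each hypothesis enters and reduces the tabulated formula to a more basic classical identity. It does not remove the dependence on tables, because the quadratic transformation is of roughly the same depth as the formula the paper cites.

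If you want the argument fully self-contained, two remarks on the ODE route:
\begin{itemize}
\item The uniqueness half is easiest via the coefficient recursion. An analytic solution $\sum d_k s^k$ of the equation with parameters $(2a,2a-c+1;c)$ must satisfy $(k+1)(k+c)d_{k+1}=(k+2a)(k+2a-c+1)d_k$. Since $c=\mu+\frac{1}{2}>0$, this determines the solution from $d_0$.
\item The substantive step you have not carried out is the change-of-variables computation. You still need to show that $(1+s)^{-2a}\,{}_2F_1(a,a+\frac{1}{2};c;\frac{4s}{(1+s)^2})$ actually satisfies that equation.
\end{itemize}

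A minor simplification: the reduction to $r\ge 0$ is unnecessary. For $|r|<1$ you already have $|z|<1$, and only the even powers $z^{2j}$ survive the integration.
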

	
	\begin{Lem}{\rm (\cite[Lemma 1.2]{Ol14})}\label{Lem-B-2}
			Let  $c>0$, $a \leq c$, $b \leq c$  and  $a b \leq 0$ ($a b \geq 0$) . Then the function  ${}_{2}F_{1}(a, b ; c ; \cdot)$  is decreasing (increasing) on $( 0,1 )$.
	\end{Lem}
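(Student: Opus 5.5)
The plan is to differentiate the series termwise and show that the sign of the derivative equals the sign of $ab$. Since $c>0$, the number $c$ is neither zero nor a negative integer, so the series \eqref{eq-1.1} defining ${}_2F_1(a,b;c;s)$ has radius of convergence at least $1$. It may therefore be differentiated term by term on $(0,1)$. Shifting the summation index and using $(a)_{k+1}=a(a+1)_k$ (and similarly for $b$ and $c$) gives the standard identity
$$
\frac{d}{ds}\,{}_2F_1(a,b;c;s)=\frac{ab}{c}\,{}_2F_1(a+1,b+1;c+1;s),\qquad s\in(0,1).
$$
Because $c>0$, it then suffices to prove that ${}_2F_1(a+1,b+1;c+1;s)>0$ for every $s\in(0,1)$.

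This positivity is the only real obstacle. The parameters $a+1$ and $b+1$ may be negative, so the coefficients of the series need not be nonnegative. To handle this I will apply Euler's transformation \eqref{eq-1.2} with parameters $(a+1,b+1;c+1)$:
$$
{}_2F_1(a+1,b+1;c+1;s)=(1-s)^{c-a-b-1}\,{}_2F_1(c-a,\,c-b;\,c+1;\,s).
$$
The factor $(1-s)^{c-a-b-1}$ is strictly positive on $(0,1)$. The hypotheses $a\le c$ and $b\le c$ give $c-a\ge 0$ and $c-b\ge 0$, so $(c-a)_k\ge 0$ and $(c-b)_k\ge 0$ for all $k$. Also $(c+1)_k>0$ because $c+1>0$. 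Hence every term of the series for ${}_2F_1(c-a,c-b;c+1;s)$ is nonnegative for $s\in(0,1)$. Its $k=0$ term equals $1$, so this function is at least $1$, and in particular positive.

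Combining the two steps, $\frac{d}{ds}{}_2F_1(a,b;c;s)$ has the sign of $ab$ throughout $(0,1)$. If $ab\le 0$ the function is nonincreasing, and if $ab\ge 0$ it is nondecreasing. When $ab<0$ (respectively $ab>0$) the monotonicity is strict. In the degenerate case $ab=0$ the function is identically $1$, which is consistent with both statements in the non-strict sense.
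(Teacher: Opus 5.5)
Your proof is correct and complete. The identity $\frac{d}{ds}{}_2F_1(a,b;c;s)=\frac{ab}{c}\,{}_2F_1(a+1,b+1;c+1;s)$, combined with Euler's transformation to $(1-s)^{c-a-b-1}\,{}_2F_1(c-a,c-b;c+1;s)$ with ${}_2F_1(c-a,c-b;c+1;s)\ge 1$, uses exactly the hypotheses $c>0$, $a\le c$, $b\le c$, and you handle the degenerate case $ab=0$ correctly. The paper gives no proof of its own here; it only quotes \cite[Lemma 1.2]{Ol14}, and your argument is the standard proof of that result.
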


	\begin{Lem}{\rm (\cite[Equation 2.5]{AK2026} or \cite[Theorem A.5]{ABR92})}\label{Lem-C}
		Let $f$ be a measurable function of one real variable. If $\eta\in \mathbb{S}^{n-1}$ and
		$\theta=\arccos\langle\zeta,\eta\rangle\in[0,\pi]
		$,
		then
		\begin{equation*}
			\int_{\mathbb{S}^{n-1}}
			f\bigl(\langle\zeta,\eta\rangle\bigr)
			\,d\sigma(\zeta)
			=
			\frac{\omega_{n-2}}{\omega_{n-1}}
			\int_{0}^{\pi}
			(\sin\theta)^{n-2}f(\cos\theta)
			\,d\theta.
		\end{equation*}
	\end{Lem}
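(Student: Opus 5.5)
The plan is to reduce to a one-dimensional integral by slicing $\mathbb{S}^{n-1}$ along the axis $\eta$. The integrand $f(\langle\zeta,\eta\rangle)$ is invariant under rotations fixing $\eta$, and both $\sigma$ and $\langle\cdot,\cdot\rangle$ are rotation invariant. So I will first apply an orthogonal map sending $\eta$ to $e_n=(0,\ldots,0,1)$, which reduces the claim to $\eta=e_n$ with $\langle\zeta,\eta\rangle=\zeta_n$.

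I will prove the identity for $f\ge 0$ measurable and then pass to general $f$ by splitting $f=f^+-f^-$. This is legitimate whenever either side is finite, and both sides are the same kind of integral of $|f|$.

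The key step is the slice formula for the unnormalized surface measure $dS=\omega_{n-1}\,d\sigma$:
\begin{equation*}
\int_{\mathbb{S}^{n-1}} g(\zeta)\,dS(\zeta)=\int_{-1}^{1}\int_{\mathbb{S}^{n-2}} g\bigl(\sqrt{1-t^2}\,\xi',t\bigr)\,(1-t^2)^{\frac{n-3}{2}}\,dS'(\xi')\,dt .
\end{equation*}
Here $dS'$ is the surface measure on $\mathbb{S}^{n-2}$; when $n=2$ this is the counting measure on $\{\pm1\}$, with $\omega_0=2$.

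To justify this formula without appealing to differential geometry, I would use polar coordinates in $\mathbb{R}^n$. Test it on $G(x)=h(|x|)\,g(x/|x|)$ with $h\ge0$ and $\int_0^\infty h(\rho)\rho^{n-1}\,d\rho=1$. Then compute $\int_{\mathbb{R}^n}G\,dx$ a second way: write $x=(y,x_n)$ with $y\in\mathbb{R}^{n-1}$ and use polar coordinates $y=s\xi'$ in $\mathbb{R}^{n-1}$. Next change variables $(s,x_n)\mapsto(\rho,t)$ with $x_n=\rho t$ and $s=\rho\sqrt{1-t^2}$, whose Jacobian is $\rho/\sqrt{1-t^2}$. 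Fubini then produces exactly the weight $s^{n-2}\cdot\rho/\sqrt{1-t^2}=\rho^{n-1}(1-t^2)^{\frac{n-3}{2}}$, and comparing the two computations gives the slice formula. This Jacobian bookkeeping is the only genuinely technical point, and I expect it to be the main (mild) obstacle.

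With the slice formula in hand, I apply it to $g(\zeta)=f(\zeta_n)$. The inner integral no longer depends on $\xi'$, so it contributes the factor $\omega_{n-2}$, and therefore
\begin{equation*}
\int_{\mathbb{S}^{n-1}} f(\zeta_n)\,d\sigma(\zeta)=\frac{\omega_{n-2}}{\omega_{n-1}}\int_{-1}^{1} f(t)\,(1-t^2)^{\frac{n-3}{2}}\,dt .
\end{equation*}
Finally, the substitution $t=\cos\theta$ with $\theta\in[0,\pi]$ gives $dt=-\sin\theta\,d\theta$ and $(1-t^2)^{\frac{n-3}{2}}=(\sin\theta)^{n-3}$. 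This turns the integral into $\frac{\omega_{n-2}}{\omega_{n-1}}\int_0^\pi(\sin\theta)^{n-2}f(\cos\theta)\,d\theta$, as asserted in Lemma C.

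As a consistency check, take $f\equiv1$. The normalization then requires $\omega_{n-2}B\bigl(\tfrac{n-1}{2},\tfrac12\bigr)=\omega_{n-1}$, and this follows directly from $\omega_{k}=2\pi^{(k+1)/2}/\Gamma\bigl(\tfrac{k+1}{2}\bigr)$.
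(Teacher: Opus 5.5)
Your proof is correct. The paper gives no argument for Lemma C; it simply quotes the formula from \cite{AK2026} and \cite{ABR92}, where it appears as the standard slice-integration formula for $\sigma$. So the comparison is between a citation and your self-contained derivation.

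Your steps check out:
\begin{itemize}
\item The reduction to $\eta=e_n$ uses only the rotation invariance of $\sigma$.
\item The Jacobian $\partial(s,x_n)/\partial(\rho,t)=\rho/\sqrt{1-t^2}$ is right. Hence $s^{n-2}\,ds\,dx_n=\rho^{n-1}(1-t^2)^{\frac{n-3}{2}}\,d\rho\,dt$, and comparing with polar coordinates in $\mathbb{R}^n$ yields the slice formula.
\item The case $n=2$ (counting measure on $\mathbb{S}^0$, $\omega_0=2$) matches the paper's convention $\omega_{n-1}=2\pi^{n/2}/\Gamma(n/2)$.
\item The substitution $t=\cos\theta$ produces exactly the weight $(\sin\theta)^{n-2}$.
\end{itemize}

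The citation buys brevity. Your route buys independence from the references, since it needs only polar coordinates in $\mathbb{R}^n$ and $\mathbb{R}^{n-1}$. It also makes explicit a hypothesis the statement leaves tacit: one needs $f\ge0$ or $(\sin\theta)^{n-2}|f(\cos\theta)|$ integrable. This covers every use in the proof of Theorem~\ref{thm-1.1}, where all integrands are nonnegative.

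If you want to be fully careful about $f$ that is only Lebesgue (not Borel) measurable, apply your slice formula to $\chi_N(\zeta_n)$ for a Borel null set $N\subset[-1,1]$. This shows $\{\zeta:\zeta_n\in N\}$ is $\sigma$-null, so $f$ may be replaced by a Borel function without changing either side.
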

	
	\begin{Lem}{\rm (\cite[Theorem 3]{HS1990})}\label{Lem-D}
		Suppose $(X,\mu)$ and $(Y,\nu)$ be $\sigma$-finite measure spaces. Let $K\geq0$ be measurable, and define
		$$
		Tf(x)=\int_Y K(x,y)f(y)\,d\nu(y),
		\qquad
		T^*g(y)=\int_X K(x,y)g(x)\,d\mu(x).
		$$
		Let $1<p<\infty$. Suppose that there exist a measurable function $h>0$ finite a.e. on $Y$, and a constant $\Lambda>0$ such that
		\begin{equation*}
			T^*\big((Th)^{p-1}\big)(y)
			\leq
			\Lambda h(y)^{p-1}
			\quad\text{for }\nu\text{-a.e. }y\in Y.
		\end{equation*}
		Then
		\begin{equation*}
			\int_X |Tf(x)|^p\,d\mu(x)
			\leq
			\Lambda
			\int_Y |f(y)|^p\,d\nu(y),
			\qquad f\in L^p(Y,\nu).
		\end{equation*}
		Consequently,
		$$
		\|T\|_{L^p(Y,\nu)\to L^p(X,\mu)}\leq \Lambda^{1/p}.
		$$
	\end{Lem}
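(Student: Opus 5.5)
We prove the bound pointwise in $x$ by Hölder's inequality against the weighted measure $K(x,y)h(y)\,d\nu(y)$. We then integrate in $x$ and use Tonelli's theorem to move the kernel onto the test function $h$, where the hypothesis $T^*\big((Th)^{p-1}\big)\le \Lambda h^{p-1}$ applies.

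\emph{Step 1 (pointwise Hölder).} Fix $x\in X$ and let $p'=p/(p-1)$. Since $K\ge 0$ and $0<h<\infty$ $\nu$-a.e., we may write
$$
|Tf(x)|\le \int_Y K(x,y)h(y)\,\frac{|f(y)|}{h(y)}\,d\nu(y).
$$
Apply Hölder's inequality with exponents $p'$ and $p$ with respect to the measure $K(x,y)h(y)\,d\nu(y)$. This gives
$$
|Tf(x)|\le \Big(\int_Y K(x,y)h(y)\,d\nu(y)\Big)^{1/p'}\Big(\int_Y K(x,y)h(y)^{1-p}|f(y)|^p\,d\nu(y)\Big)^{1/p}.
$$
Raising to the power $p$,
$$
|Tf(x)|^p\le (Th)(x)^{p-1}\int_Y K(x,y)h(y)^{1-p}|f(y)|^p\,d\nu(y).
$$
We use the convention $0\cdot\infty=0$. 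This convention is justified as follows. If the second integral vanishes, then $f=0$ almost everywhere with respect to $K(x,\cdot)\,d\nu$, so $Tf(x)=0$. If $(Th)(x)=\infty$ and the second factor is positive, the inequality holds trivially.

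\emph{Step 2 (Tonelli).} Integrate the bound over $X$ with respect to $\mu$. All integrands are nonnegative and measurable, and both spaces are $\sigma$-finite, so Tonelli's theorem lets us interchange the order of integration:
$$
\int_X|Tf|^p\,d\mu\le \int_Y h(y)^{1-p}|f(y)|^p\Big(\int_X K(x,y)(Th)(x)^{p-1}\,d\mu(x)\Big)d\nu(y)
=\int_Y h^{1-p}|f|^p\,T^*\big((Th)^{p-1}\big)\,d\nu.
$$

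\emph{Step 3 (apply the hypothesis).} By assumption, $T^*\big((Th)^{p-1}\big)(y)\le \Lambda h(y)^{p-1}$ for $\nu$-a.e.\ $y$. Substituting this into Step 2, the factors of $h$ cancel and we obtain
$$
\int_X |Tf|^p\,d\mu\le\Lambda\int_Y|f|^p\,d\nu.
$$
Taking $p$-th roots gives $\|T\|_{L^p(Y,\nu)\to L^p(X,\mu)}\le\Lambda^{1/p}$.

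The only delicate point is the measure-theoretic bookkeeping in Step 1. Specifically, one must check that the possible infinite values of $Th$ and the null sets where $h$ might vanish or blow up do not invalidate the inequality. This is handled by the $0\cdot\infty$ convention together with the fact that $h>0$ and $h<\infty$ $\nu$-a.e.
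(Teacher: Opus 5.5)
The paper does not prove this lemma; it imports it from Howard--Schep (Theorem~3). Your argument is correct and is the standard proof of this Schur test: H\"older's inequality against the measure $K(x,y)h(y)\,d\nu(y)$ gives $|Tf(x)|^p\le (Th)(x)^{p-1}\int_Y K(x,y)h(y)^{1-p}|f(y)|^p\,d\nu(y)$, and Tonelli plus the hypothesis finish it. The one point you leave implicit, that $Tf(x)$ is defined at all, is also covered: your bound holds for $\int_Y K(x,y)|f(y)|\,d\nu(y)$, and its integrated form shows this quantity is finite for $\mu$-a.e.\ $x$.
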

	
\begin{lem}\label{lem-2.1}
	Let $n\geq2$, $1<p<\infty$, and $\alpha>-1$. For $0<\theta<\pi$, define
	$$
	\mathcal I_{n,p,\alpha}(\theta)
	=
	\int_{-1}^{1}
	\frac{(1-r^2)^{\alpha+\frac{n-1}{p}}}
	{(1-2r\cos\theta+r^2)^{(n+\alpha)/2}}
	\,dr.
	$$
	Then there exists a positive constant $C_{1}:=C_{1}(n,p,\alpha)$ such that
	$$
	\mathcal I_{n,p,\alpha}(\theta)
	\leq
	C_{1}
	(\sin\theta)^{-\frac{(n-1)(p-1)}{p}},
	\qquad 0<\theta<\pi,
	$$
	where
	$$
	C_{1}=\begin{cases}
		\displaystyle
		\frac{\sqrt{\pi}\,\Gamma\left(\frac{n-1+(1+\alpha)p}{2p}\right)}
		{2\Gamma\left(\frac{n-1+(2+\alpha)p}{2p}\right)},
		& \text{if } \alpha-n+2+\frac{2(n-1)}{p}<0, \\[2ex]
		\displaystyle
		2^{\alpha+\frac{n-1}{p}-1}
		B\left(
		\frac{(n-1)(p-1)}{2p},
		\frac{\alpha+1+\frac{n-1}{p}}{2}
		\right),
		& \text{if } \alpha-n+2+\frac{2(n-1)}{p}\geq0.
	\end{cases}
	$$
\end{lem}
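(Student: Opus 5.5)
The plan is to remove the $\theta$-dependence with one explicit change of variables. This pulls out exactly the factor $(\sin\theta)^{-\frac{(n-1)(p-1)}{p}}$ and leaves a one-variable trigonometric integral, which we bound separately in the two regimes. Write $\beta=\alpha+\frac{n-1}{p}$, and note that $1-2r\cos\theta+r^2=(r-\cos\theta)^2+\sin^2\theta$.

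\textbf{Step 1 (substitution).} For fixed $\theta\in(0,\pi)$, put $r-\cos\theta=\sin\theta\cot\varphi$, so that $r=\sin(\varphi+\theta)/\sin\varphi$. Then
$$1-2r\cos\theta+r^2=\frac{\sin^2\theta}{\sin^2\varphi},\qquad |dr|=\frac{\sin\theta}{\sin^2\varphi}\,d\varphi,\qquad 1-r^2=\frac{-\sin\theta\,\sin(2\varphi+\theta)}{\sin^2\varphi}.$$
The endpoints $r=1$ and $r=-1$ correspond to $\varphi=\frac{\pi-\theta}{2}$ and $\varphi=\pi-\frac{\theta}{2}$. Next set $\psi=2\varphi+\theta-\pi\in(0,\pi)$, so that $-\sin(2\varphi+\theta)=\sin\psi$. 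The powers of $\sin\theta$ then combine to $\beta+1-(n+\alpha)=-\frac{(n-1)(p-1)}{p}$, and we expect
$$\mathcal I_{n,p,\alpha}(\theta)=\frac12(\sin\theta)^{-\frac{(n-1)(p-1)}{p}}F(\theta),\qquad F(\theta)=\int_0^\pi(\sin\psi)^{\beta}\Bigl(\sin\tfrac{\psi+\pi-\theta}{2}\Bigr)^{\gamma}d\psi,$$
where $\gamma=n+\alpha-2-2\beta=n-2-\alpha-\frac{2(n-1)}{p}$. It remains to show $\frac12 F(\theta)\le C_1$.

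\textbf{Step 2 (case $\gamma>0$, i.e. $\alpha-n+2+\frac{2(n-1)}{p}<0$).} The argument $\frac{\psi+\pi-\theta}{2}$ lies in $(0,\pi)$, so the sine factor is at most $1$. Hence
$$\tfrac12F(\theta)\le\tfrac12\int_0^\pi\sin^\beta\psi\,d\psi=\tfrac12B\bigl(\tfrac12,\tfrac{\beta+1}2\bigr).$$
This equals the first constant, since $\frac{\beta+1}{2}=\frac{n-1+(1+\alpha)p}{2p}$.

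\textbf{Step 3 (case $\gamma\le0$).} Here $x\mapsto(\sin x)^\gamma$ is convex on $(0,\pi)$ (for $\gamma<0$ we have $(\sin^\gamma)''=\gamma\sin^{\gamma-2}x\,(\gamma\cos^2x-1)\ge0$; the case $\gamma=0$ is trivial). Composed with a map affine in $\theta$, it gives an integrand convex in $\theta$, so $F$ is convex on $(0,\pi)$. Its supremum is therefore approached at $\theta\to0$ or $\theta\to\pi$. The substitution $\psi\mapsto\pi-\psi$ shows both endpoint limits are equal. At $\theta=\pi$, using $\sin\psi=2\sin\frac\psi2\cos\frac\psi2$, we get
$$F(\pi)=2^\beta\int_0^\pi\bigl(\sin\tfrac\psi2\bigr)^{\beta+\gamma}\bigl(\cos\tfrac\psi2\bigr)^{\beta}d\psi=2^{\beta}B\Bigl(\tfrac{\beta+\gamma+1}{2},\tfrac{\beta+1}{2}\Bigr).$$
Since $\beta+\gamma+1=\frac{(n-1)(p-1)}{p}>0$, this is finite, and $\frac12F(\pi)$ is exactly the second constant $2^{\beta-1}B(\cdot,\cdot)$. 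Monotone convergence justifies passing to the endpoints.

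The main obstacle is Step 3. We must confirm that convexity of $F$ (or some other monotonicity argument) really yields $\sup_{(0,\pi)}F=F(0^+)=F(\pi^-)$, including finiteness and continuity of $F$ up to the endpoints. The fallback is a direct comparison: $\bigl(\sin\tfrac{\psi+\pi-\theta}2\bigr)^{\gamma}\le\bigl(\sin\tfrac\psi2\bigr)^{\gamma}+\bigl(\cos\tfrac\psi2\bigr)^{\gamma}$ with some care. That would only give a cruder constant, so convexity is the route to try first.
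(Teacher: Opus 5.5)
Your proposal is correct and is essentially the paper's proof. Under $\psi=\pi-2x$ your $F$ equals $2^{\beta+1}\mathcal G_{n,p,\alpha}$ from \eqref{eq-2.1}, with your $\beta=\alpha+\frac{n-1}{p}$. The case split, the trivial bound for $\gamma>0$, and the convexity-plus-endpoint argument for $\gamma\le 0$ all match the paper. Your pointwise convexity of $\sin^\gamma$ and the symmetry $F(\theta)=F(\pi-\theta)$ are a slightly cleaner way to get what the paper obtains by differentiating under the integral sign and computing both endpoints.

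The one thing to tidy is the endpoint limit. The integrand is monotone in $\theta$ only where $\psi\le\theta$, so a bare appeal to monotone convergence does not apply. Use dominated convergence on $[\pi/2,\pi)$ instead, with the majorant $\sin^\beta\psi\,\bigl(\sin^\gamma\tfrac{\psi}{2}+2^{-\gamma/2}\bigr)$. Together with convexity and symmetry, this gives $\sup_{(0,\pi)}F=\lim_{\theta\to\pi^-}F(\theta)=2^\beta B\bigl(\tfrac{(n-1)(p-1)}{2p},\tfrac{\beta+1}{2}\bigr)$.
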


\begin{proof}
	Fix $0<\theta<\pi$. We make the change of variables
	$$
	\frac{1+r}{1-r}
	=
	y\cot\frac{\theta}{2},
	\qquad y\in(0,\infty).
	$$
	Equivalently,
	$$
	r=\frac{y\cos\frac{\theta}{2}-\sin\frac{\theta}{2}}
	{y\cos\frac{\theta}{2}+\sin\frac{\theta}{2}}.
	$$
	A direct computation gives
	$$
	1-r^2=\frac{2y\sin\theta}{\left(y\cos\frac{\theta}{2}
	+\sin\frac{\theta}{2}\right)^2},
	$$
	$$
	1-2r\cos\theta+r^2=\frac{(1+y^2)\sin^2\theta}
	{\left(y\cos\frac{\theta}{2}+\sin\frac{\theta}{2}\right)^2}
	$$
	and
	$$
	dr=\frac{\sin\theta}
	{\left(y\cos\frac{\theta}{2}+\sin\frac{\theta}{2}\right)^2
	}\,dy.
	$$
	Substituting these identities into the definition of
	$\mathcal I_{n,p,\alpha}(\theta)$, we obtain
	$$
	\mathcal I_{n,p,\alpha}(\theta)=
	2^{\alpha+\frac{n-1}{p}}
	(\sin\theta)^{-\frac{(n-1)(p-1)}{p}}
	\mathcal G_{n,p,\alpha}(\theta),
	$$
	where
	$$
	\mathcal G_{n,p,\alpha}(\theta)
	=\int_0^\infty\frac{y^{\alpha+\frac{n-1}{p}}}
	{(1+y^2)^{(n+\alpha)/2}}
	\frac{dy}{\left(y\cos\frac{\theta}{2}+\sin\frac{\theta}{2}
	\right)^{\alpha-n+2+\frac{2(n-1)}{p}}}.
	$$
	Now put $y=\tan x$, where $0<x<\pi/2$. Since
	$$
	y\cos\frac{\theta}{2}+\sin\frac{\theta}{2}
	=\frac{\sin\left(x+\frac{\theta}{2}\right)}{\cos x},
	$$
	we have
	\begin{equation}\label{eq-2.1}
		\mathcal G_{n,p,\alpha}(\theta)
		=\int_0^{\pi/2}\frac{\sin^{\alpha+\frac{n-1}{p}}x\cos^{\alpha+\frac{n-1}{p}}x}
		{\sin^{\alpha-n+2+\frac{2(n-1)}{p}}
	\left(x+\frac{\theta}{2}\right)
		}\,dx.
	\end{equation}
	
		In order to estimate $\mathcal I_{n,p,\alpha}(\theta)$, we divide the proof into two cases according to the
	value of $\alpha$.
	
	\noindent\textbf{Case 1.}
	Suppose that $\alpha-n+2+\frac{2(n-1)}{p}<0$.
	
	Based on the interval of $\alpha$, one obtains that
	$$
	\frac{1}
	{\sin^{\alpha-n+2+\frac{2(n-1)}{p}}\left(x+\frac{\theta}{2}\right)}
	=\sin^{-\alpha+n-2-\frac{2(n-1)}{p}}
	\left(x+\frac{\theta}{2}\right)
	\leq 1.
	$$
	Since $\alpha+\frac{n-1}{p}>-1$,
	it follows from \eqref{eq-2.1} that
	\begin{align*}
		\mathcal G_{n,p,\alpha}(\theta)
		&\leq\int_0^{\pi/2}\sin^{\alpha+\frac{n-1}{p}}x
		\cos^{\alpha+\frac{n-1}{p}}x
		\,dx  \\
		&=\frac{2^{-\frac{n-1+(1+\alpha)p}{p}}
	\sqrt{\pi}\,\Gamma\left(\frac{n-1+(1+\alpha)p}{2p}\right)}{\Gamma\left(\frac{n-1+(2+\alpha)p}{2p}\right)
		}.
	\end{align*}
	Consequently,
	$$
	\mathcal I_{n,p,\alpha}(\theta)\leq
	\frac{\sqrt{\pi}\,\Gamma\left(\frac{n-1+(1+\alpha)p}{2p}\right)}{2\Gamma\left(\frac{n-1+(2+\alpha)p}{2p}\right)}(\sin\theta)^{-\frac{(n-1)(p-1)}{p}}.
	$$
	
	\noindent\textbf{Case 2.}
	Suppose that $\alpha-n+2+\frac{2(n-1)}{p}\geq0$.
	
	Put
	$$
	\beta=\alpha-n+2+\frac{2(n-1)}{p}.
	$$
	Then \eqref{eq-2.1} becomes
	$$
	\mathcal G_{n,p,\alpha}(\theta)
	=\int_0^{\pi/2}\sin^{\alpha+\frac{n-1}{p}}x
	\cos^{\alpha+\frac{n-1}{p}}x\sin^{-\beta}
	\left(x+\frac{\theta}{2}\right)\,dx.
	$$
	We first justify differentiating under the integral sign. Let
	$J\subset(0,\pi)$ be a compact interval. Then there exists a constant
	$c_J>0$ such that
	$$
	\sin\left(x+\frac{\theta}{2}\right)\geq c_J,
	\qquad
	0<x<\frac{\pi}{2},\quad \theta\in J.
	$$
	Hence, for $j=0,1,2$, there exists a constant $C_J>0$ such that
	$$
	\left|\frac{\partial^j}{\partial\theta^j}
	\sin^{-\beta}\left(x+\frac{\theta}{2}\right)
	\right|\leq C_J,
	\qquad0<x<\frac{\pi}{2},\quad \theta\in J.
	$$
	Since $\alpha+\frac{n-1}{p}>-1$,
	the function $\sin^{\alpha+\frac{n-1}{p}}x
	\cos^{\alpha+\frac{n-1}{p}}x$
	is integrable on $(0,\pi/2)$. Therefore, by the dominated convergence
	theorem, differentiation under the integral sign is justified on
	$(0,\pi)$.
	
	For $0<\theta<\pi$, we get
	\begin{align*}
		\mathcal G_{n,p,\alpha}''(\theta)
		&=\frac{\beta}{4}\int_0^{\pi/2}
		\sin^{\alpha+\frac{n-1}{p}}x
		\cos^{\alpha+\frac{n-1}{p}}x  \\
		&\quad\times
		\frac{1+\beta\cos^2\left(x+\frac{\theta}{2}\right)}{\sin^{\beta+2}\left(x+\frac{\theta}{2}\right)}\,dx\geq0.
	\end{align*}
	Thus $\mathcal G_{n,p,\alpha}$ is convex on $(0,\pi)$.
	
	We next show that $\mathcal G_{n,p,\alpha}$ admits a continuous
	extension to $[0,\pi]$ and compute the two endpoint values.
	As $\theta\to0^+$, the integrand in \eqref{eq-2.1}
	converges pointwise to
	$$
	\sin^{
		n-2-\frac{n-1}{p}
	}x
	\cos^{
		\alpha+\frac{n-1}{p}
	}x.
	$$
	Moreover, this function is integrable on $(0,\pi/2)$ because $n-2-\frac{n-1}{p}>-1$
	and $\alpha+\frac{n-1}{p}>-1$.
	Therefore
	\begin{align*}
		\mathcal G_{n,p,\alpha}(0)
		&:=\lim_{\theta\to0^+}
		\mathcal G_{n,p,\alpha}(\theta) \\
		&=\int_0^{\pi/2}\sin^{n-2-\frac{n-1}{p}
		}x\cos^{\alpha+\frac{n-1}{p}}x\,dx  \\
		&=\frac12B\left(\frac{(n-1)(p-1)}{2p},
		\frac{\alpha+1+\frac{n-1}{p}}{2}
		\right).
	\end{align*}
	Similarly,
	\begin{align*}
		\mathcal G_{n,p,\alpha}(\pi)
		&:=\lim_{\theta\to\pi^-}
		\mathcal G_{n,p,\alpha}(\theta) \\
		&=\int_0^{\pi/2}\sin^{\alpha+\frac{n-1}{p}}x
		\cos^{n-2-\frac{n-1}{p}}x\,dx  \\
		&=\frac12B\left(\frac{\alpha+1+\frac{n-1}{p}}{2},
		\frac{(n-1)(p-1)}{2p}\right).
	\end{align*}
	By the symmetry of the Beta function,
	$$
	\mathcal G_{n,p,\alpha}(0)
	=
	\mathcal G_{n,p,\alpha}(\pi).
	$$
	Since $\mathcal G_{n,p,\alpha}$ is convex on $(0,\pi)$ and admits a
	continuous extension to $[0,\pi]$, it follows that
	$$
	\mathcal G_{n,p,\alpha}(\theta)
	\leq\max\{\mathcal G_{n,p,\alpha}(0),
	\mathcal G_{n,p,\alpha}(\pi)\}
	=\mathcal G_{n,p,\alpha}(0),
	\qquad 0<\theta<\pi.
	$$
	Therefore
	$$
    \mathcal G_{n,p,\alpha}(\theta)\leq
	\frac12B\left(\frac{(n-1)(p-1)}{2p},
	\frac{\alpha+1+\frac{n-1}{p}}{2}\right),
	\qquad 0<\theta<\pi.
	$$
	Consequently,
	$$
	\mathcal I_{n,p,\alpha}(\theta)
	\leq2^{\alpha+\frac{n-1}{p}-1}
	B\left(\frac{(n-1)(p-1)}{2p},
	\frac{\alpha+1+\frac{n-1}{p}}{2}\right)
	(\sin\theta)^{-\frac{(n-1)(p-1)}{p}}.
	$$
	The proof is complete.
\end{proof}

	\section{Proof of Theorem \ref{thm-1.1}}\label{Sec-3}
	The aim of this section is to prove Theorem \ref{thm-1.1}. The proof of Theorem \ref{thm-1.1} will be split into two steps for discussion.
	
	\begin{step}
		Boundedness.
	\end{step}
	
We apply the Schur test in the following setting. Let $X=(-1,1)$ be equipped with the measure
	$$
	d\mu_n(r)=(1-r^2)^{n-2}\,dr,
	$$
	and let $Y=\mathbb S^{n-1}$ be equipped with the normalized surface measure $d\sigma(\xi)$. For fixed $\eta\in\mathbb S^{n-1}$, define the positive integral operator
	$$
	(T_{\alpha,\eta}f)(r)
	=C_{n, \alpha}\int_{\mathbb S^{n-1}}
	\frac{(1-r^2)^{1+\alpha}}
	{(1-2r\langle\eta,\xi\rangle+r^2)^{(n+\alpha)/2}}
	f(\xi)\,d\sigma(\xi),
	\qquad -1<r<1.
	$$
	Its adjoint is given by
	$$
	(T_{\alpha,\eta}^*g)(\xi)
	=\int_{-1}^{1}P_\alpha(r\eta,\xi)g(r)(1-r^2)^{n-2}\,dr.
	$$
	
	Define
	$$
	h(\xi)=\left(1-\langle\xi,\eta\rangle^2\right)^{-\frac{n-1}{2p}}.
	$$
	If $\theta=\arccos\langle\xi,\eta\rangle$,
	then
	$$
	h(\xi)=(\sin\theta)^{-\frac{n-1}{p}}.
	$$
	Since
	$$
	0<\frac{n-1}{p}<n-1,
	$$
	the function $h$ is finite almost everywhere and integrable on $\mathbb S^{n-1}$.
	
	By the Lemma \ref{Lem-C}, we have
	$$
	\begin{aligned}
		(T_{\alpha,\eta}h)(r)
		&=C_{n,\alpha}\frac{\omega_{n-2}}{\omega_{n-1}}(1-r^2)^{1+\alpha}\int_0^\pi\frac{(\sin\theta)^{n-2-\frac{n-1}{p}}}{(1-2r\cos\theta+r^2)^{(n+\alpha)/2}}
		\,d\theta.
	\end{aligned}
	$$
	Using Lemma \ref{Lem-B} with
	$$
	\mu=\frac{(n-1)(p-1)}{2p},
	\qquad\nu=\frac{n+\alpha}{2},
	$$
	we obtain
	$$
	\begin{aligned}
		(T_{\alpha,\eta}h)(r)
		&=C_{n, \alpha}\frac{\omega_{n-2}}{\omega_{n-1}}
		B\left(\frac{(n-1)(p-1)}{2p},\frac12\right)
		(1-r^2)^{1+\alpha}  \\
		&\quad\times
		{}_2F_1\left(\frac{n+\alpha}{2},\frac{\alpha+2+\frac{n-1}{p}}{2};\frac{n-\frac{n-1}{p}}{2};r^2\right).
	\end{aligned}
	$$
	Form \eqref{eq-1.2}, we get
	\begin{align*}
		&(1-r^2)^{\frac{n-1}{p}}(T_{\alpha,\eta}h)(r) \\
		&=C_{n, \alpha}\frac{\omega_{n-2}}{\omega_{n-1}}
		B\left(\frac{(n-1)(p-1)}{2p},\frac12\right)
		{}_2F_1\left(-\frac{\alpha+\frac{n-1}{p}}{2},
		\frac{n-2-\alpha-\frac{2(n-1)}{p}}{2};
		\frac{n-\frac{n-1}{p}}{2};r^2\right).
	\end{align*}
	Moreover,
	$$
	\frac{n-\frac{n-1}{p}}{2}+\frac{\alpha+\frac{n-1}{p}}{2}
	-\frac{n-2-\alpha-\frac{2(n-1)}{p}}{2}
	=\alpha+1+\frac{n-1}{p}>0.
	$$
	Thus ${}_2F_1\left(
	-\frac{\alpha+\frac{n-1}{p}}{2},
	\frac{n-2-\alpha-\frac{2(n-1)}{p}}{2};
	\frac{n-\frac{n-1}{p}}{2};t\right)$ is bound on $[0,1]$. Let 
	$$
	C_{2}:=\sup\left\{{}_2F_1\left(
	-\frac{\alpha+\frac{n-1}{p}}{2},\ 
	\frac{n-2-\alpha-\frac{2(n-1)}{p}}{2}\ ;\ 
	\frac{n-\frac{n-1}{p}}{2}\ ;\ t
	\right):\ t\in[0,1]\right\}
	$$
	Then
	$$
	(T_{\alpha,\eta}h)(r)\leq
	C_{n, \alpha}C_{2}\frac{\omega_{n-2}}{\omega_{n-1}}
	B\left(\frac{(n-1)(p-1)}{2p},
	\frac12\right)(1-r^2)^{-\frac{n-1}{p}},
	\qquad 0<r<1.
	$$
	Since
	$$
	P_\alpha(-r\eta,\xi)=P_\alpha(r\eta,-\xi)\qquad
	\text{and}\qquad
	h(-\xi)=h(\xi),
	$$
	the same estimate holds for $-1<r<0$. Consequently,
	\begin{equation}\label{eq-3.1}
		(T_{\alpha,\eta}h)(r)\leq C_{n, \alpha}C_{2}\frac{\omega_{n-2}}{\omega_{n-1}}
		B\left(\frac{(n-1)(p-1)}{2p},
		\frac12\right)(1-r^2)^{-\frac{n-1}{p}},
		\qquad -1<r<1.
	\end{equation}
	
	From \eqref{eq-3.1}, we have
	\begin{equation}\label{eq-3.2}
		T_{\alpha,\eta}^*
		\left((T_{\alpha,\eta}h)^{p-1}\right)(\xi)
		\leq C_{n,\alpha}\left(C_{n, \alpha}C_{2}\frac{\omega_{n-2}}{\omega_{n-1}}B\left(\frac{(n-1)(p-1)}{2p},
		\frac12\right)\right)^{p-1}J(\theta),
	\end{equation}
	where
	$$
	J(\theta)
	=
	\int_{-1}^{1}
	\frac{(1-r^2)^{\alpha+\frac{n-1}{p}}}
	{(1-2r\cos\theta+r^2)^{(n+\alpha)/2}}
	\,dr.
	$$
	
	By Lemma \ref{lem-2.1}, we know that
	\begin{equation}\label{eq-3.3}
		J(\theta)\leq C_{1}(\sin\theta)^{-\frac{(n-1)(p-1)}{p}},
		\qquad 0<\theta<\pi.
	\end{equation}
	Combining \eqref{eq-3.2} and \eqref{eq-3.3}, we obtain
	$$
	T_{\alpha,\eta}^*
	\left((T_{\alpha,\eta}h)^{p-1}\right)(\xi)
	\leq
	C_{1}C_{n,\alpha}^{p}\left(C_{2}\frac{\omega_{n-2}}{\omega_{n-1}}
	B\left(
	\frac{(n-1)(p-1)}{2p},
	\frac12
	\right)\right)^{p-1}h(\xi)^{p-1}
	$$
	for $\sigma$-almost every $\xi\in\mathbb S^{n-1}$. Therefore,  Lemma \ref{Lem-D} gives
	$$
	\int_{-1}^{1}
	|T_{\alpha,\eta}f(r)|^p(1-r^2)^{n-2}\,dr
	\leq
	C_{1}C_{n,\alpha}^{p}\left(C_{2}\frac{\omega_{n-2}}{\omega_{n-1}}
	B\left(
	\frac{(n-1)(p-1)}{2p},
	\frac12
	\right)\right)^{p-1}
	\int_{\mathbb S^{n-1}}|f(\xi)|^p\,d\sigma(\xi).
	$$
	This proves the asserted boundedness.
	
	\begin{step}
		Sharpness of the constant.
	\end{step}
	
	Assume now that
	$$
	\alpha>\max\left\{-\frac{n-1}{p},
	\,n-2-\frac{2(n-1)}{p}
	\right\}.
	$$
	Then
	$$
	-\frac{\alpha+\frac{n-1}{p}}{2}<0
	\qquad
	\text{and}\qquad
	\frac{n-2-\alpha-\frac{2(n-1)}{p}}{2}<0.
	$$
	Hence, by the Lemma \ref{Lem-B-2}, ${}_2F_1
	\left(
	-\frac{\alpha+\frac{n-1}{p}}{2},
	\frac{n-2-\alpha-\frac{2(n-1)}{p}}{2};
	\frac{n-\frac{n-1}{p}}{2};
	t
	\right)$ is increasing in $t\in[0,1)$, then
	$$
	\sup_{0<r<1}
	(1-r^2)^{\frac{n-1}{p}}(T_{\alpha,\eta}h)(r) 
	=\lim_{r\to1^-}(1-r^2)^{\frac{n-1}{p}}(T_{\alpha,\eta}h)(r).
	$$
	By \eqref{eq-1.3} and \eqref{eq-1.4}, we obtain
	\begin{align*}
	&\lim_{r\to1^-}
	(1-r^2)^{\frac{n-1}{p}}(T_{\alpha,\eta}h)(r) \\
	=&C_{n, \alpha}\frac{\omega_{n-2}}{\omega_{n-1}}\frac{\Gamma\left(\frac{1}{2}\right)\Gamma\left(\frac{(n-1)(p-1)}{2p}\right)}{\Gamma\left(\frac{n(p-1)+1}{2p}\right)}\frac{\Gamma\left(\frac{n(p-1)+1}{2p}\right)\Gamma\left(1+\alpha+\frac{n-1}{p}\right)}{\Gamma\left(\frac{n+\alpha}{2}\right)\Gamma\left(\frac{2+\alpha+\frac{n-1}{p}}{2}\right)}\\
	=&C_{n,\alpha}\frac{\omega_{n-2}}{\omega_{n-1}}
	2^{\alpha+\frac{n-1}{p}}
	B\left(
	\frac{(n-1)(p-1)}{2p},
	\frac{\alpha+1+\frac{n-1}{p}}{2}
	\right).
\end{align*}
	Thus
	\begin{equation}\label{eq-3.4}
		(T_{\alpha,\eta}h)(r)
		\leq C_{n,\alpha}\frac{\omega_{n-2}}{\omega_{n-1}}2^{\alpha+\frac{n-1}{p}}
		B\left(\frac{(n-1)(p-1)}{2p},
		\frac{\alpha+1+\frac{n-1}{p}}{2}\right)
		(1-r^2)^{-\frac{n-1}{p}}.
	\end{equation}
	Furthermore, the assumption $\alpha>n-2-\frac{2(n-1)}{p}$
	implies
	$$
	\alpha-n+2+\frac{2(n-1)}{p}>0.
	$$
	Therefore the  part of Lemma \ref{lem-2.1} gives
	$$
	J(\theta)\leq
	2^{\alpha+\frac{n-1}{p}-1}B\left(\frac{(n-1)(p-1)}{2p},\frac{\alpha+1+\frac{n-1}{p}}{2}\right)
	(\sin\theta)^{-\frac{(n-1)(p-1)}{p}}.
	$$
	Combining this estimate with \eqref{eq-3.4} and applying Lemma \ref{Lem-D}, we obtain
	$$
	\begin{aligned}
		&\int_{-1}^{1}
		|T_{\alpha,\eta}f(r)|^p(1-r^2)^{n-2}\,dr \\
		&\leq2^{p\alpha+n-2}C_{n,\alpha}^{p}
		\left(\frac{\omega_{n-2}}{\omega_{n-1}}\right)^{p-1}\left[B\left(\frac{(n-1)(p-1)}{2p},
		\frac{\alpha+1+\frac{n-1}{p}}{2}\right)\right]^p\int_{\mathbb S^{n-1}}|f(\xi)|^p\,d\sigma(\xi).
	\end{aligned}
	$$
	
	Set $0<\varepsilon<\frac{n-1}{p}$, define
	$$
	f_\varepsilon(\xi)
	=\left(1-\langle\xi,\eta\rangle^2\right)^{
		-\frac{1}{2}\left(\frac{n-1}{p}-\varepsilon\right)}.
	$$
	If $\theta=\arccos\langle\xi,\eta\rangle$,
	then
	$$
	f_\varepsilon(\xi)
	=(\sin\theta)^{-\frac{n-1}{p}+\varepsilon}.
	$$
	By Lemma \ref{Lem-C}, we have that
	\begin{align*}
		\|f_\varepsilon\|_{L^p(\mathbb S^{n-1})}^p
		&=\frac{\omega_{n-2}}{\omega_{n-1}}
		\int_0^\pi(\sin\theta)^{n-2-p\left(\frac{n-1}{p}-\varepsilon\right)}
		\,d\theta \\
		&=\frac{\omega_{n-2}}{\omega_{n-1}}
		\int_0^\pi
		(\sin\theta)^{-1+p\varepsilon}
		\,d\theta \\
		&=\frac{\omega_{n-2}}{\omega_{n-1}}
		B\left(\frac{p\varepsilon}{2},\frac12\right).
	\end{align*}
	Since
	$$
	B\left(\frac{p\varepsilon}{2},\frac12\right)
	=\frac{2}{p\varepsilon}(1+o(1)),
	\qquad \varepsilon\to0^+,
	$$
	we have
	\begin{equation}\label{eq-3.5}
		\lim_{\varepsilon\to0^+}\|f_\varepsilon\|_{L^p(\mathbb S^{n-1})}^p
		=\lim_{\varepsilon\to0^+}
		\frac{2}{p\varepsilon}\frac{\omega_{n-2}}{\omega_{n-1}}(1+o(1)).
	\end{equation}
	
	Set
	$$
	u_\varepsilon(r)
	=(T_{\alpha,\eta}f_\varepsilon)(r).
	$$
	For sufficiently small $\varepsilon>0$ (we can  choose $0<\varepsilon<1+\alpha+\frac{1}{p}$), the same computation yields
	$$
	\lim_{r\to1^-}
	(1-r^2)^{\frac{n-1}{p}-\varepsilon}
	u_\varepsilon(r) \\
	=C_{n,\alpha}\frac{\omega_{n-2}}{\omega_{n-1}}
	2^{\alpha+\frac{n-1}{p}-\varepsilon}
	B\left(\frac{n-1-\left(\frac{n-1}{p}-\varepsilon\right)}{2},\frac{\alpha+1+\frac{n-1}{p}-\varepsilon}{2}\right).
	$$
	In particular,
	$$
	\lim_{\varepsilon\to0^+}
	\lim_{r\to1^-}(1-r^2)^{\frac{n-1}{p}-\varepsilon}
	u_\varepsilon(r) \\
	=C_{n,\alpha}\frac{\omega_{n-2}}{\omega_{n-1}}
	2^{\alpha+\frac{n-1}{p}}
	B\left(
	\frac{(n-1)(p-1)}{2p},
	\frac{\alpha+1+\frac{n-1}{p}}{2}
	\right).
	$$
	Fix $\delta\in(0,1)$. Then there exist $r_\delta\in(0,1)$ and $\varepsilon_\delta>0$ such that, whenever $r_\delta<r<1$ and $0<\varepsilon<\varepsilon_\delta$,
	$$
	\begin{aligned}
		u_\varepsilon(r)
		&\geq(1-\delta)	C_{n,\alpha}
		\frac{\omega_{n-2}}{\omega_{n-1}}
		2^{\alpha+\frac{n-1}{p}-\varepsilon}
		B\left(\frac{n-1-\left(\frac{n-1}{p}-\varepsilon\right)}{2},\frac{\alpha+1+\frac{n-1}{p}-\varepsilon}{2}
		\right) \\
		&\quad\times(1-r^2)^{-\frac{n-1}{p}+\varepsilon}.
	\end{aligned}
	$$
	Therefore
	$$
	\begin{aligned}
		&\int_{r_\delta}^{1}
		|u_\varepsilon(r)|^p(1-r^2)^{n-2}\,dr \\
		&\geq(1-\delta)^p\left[C_{n,\alpha}\frac{\omega_{n-2}}{\omega_{n-1}}2^{\alpha+\frac{n-1}{p}-\varepsilon}
		B\left(\frac{n-1-\left(\frac{n-1}{p}-\varepsilon\right)}{2},\frac{\alpha+1+\frac{n-1}{p}-\varepsilon}{2}
		\right)\right]^p  \\
		&\quad\times\int_{r_\delta}^{1}(1-r^2)^{n-2-p\left(\frac{n-1}{p}-\varepsilon\right)}\,dr.
	\end{aligned}
	$$
	Since $n-2-p\left(\frac{n-1}{p}-\varepsilon\right)
	=-1+p\varepsilon$, by \cite[Lemma  3.2]{AK2026},  
	we have
	$$
	\int_{r_\delta}^{1}
	(1-r^2)^{-1+p\varepsilon}\,dr
	=\frac{1}{2p\varepsilon}(1+o(1)),
	\qquad \varepsilon\to0^+.
	$$
	Since $u_\varepsilon$ is an even  function in $r$, the two endpoints $r=1$ and $r=-1$ give the same asymptotic contribution. Hence
	$$
	\begin{aligned}
		&\lim_{\varepsilon\to0^+}\int_{-1}^{1}
		|u_\varepsilon(r)|^p(1-r^2)^{n-2}\,dr \\
		&\geq\lim_{\varepsilon\to0^+}
		(1-\delta)^p\left[	C_{n,\alpha}
		\frac{\omega_{n-2}}{\omega_{n-1}}
		2^{\alpha+\frac{n-1}{p}-\varepsilon}
		B\left(\frac{n-1-\left(\frac{n-1}{p}-\varepsilon\right)}{2},\frac{\alpha+1+\frac{n-1}{p}-\varepsilon}{2}
		\right)\right]^p\frac{1}{p\varepsilon}(1+o(1)).
	\end{aligned}
	$$
	Dividing  \eqref{eq-3.5} and then letting $\varepsilon\to0^+$ gives
	$$
	\begin{aligned}
		&\liminf_{\varepsilon\to0^+}
		\frac{\int_{-1}^{1}|u_\varepsilon(r)|^p(1-r^2)^{n-2}\,dr
		}{\|f_\varepsilon\|_{L^p(\mathbb S^{n-1})}^p
		} \\
		&\geq(1-\delta)^p2^{p\alpha+n-2}	C_{n,\alpha}^{p}
		\left(\frac{\omega_{n-2}}{\omega_{n-1}}\right)^{p-1}\left[B\left(\frac{(n-1)(p-1)}{2p},
		\frac{\alpha+1+\frac{n-1}{p}}{2}\right)\right]^p.
	\end{aligned}
	$$
	Finally, letting $\delta\to0^+$ proves that the constant in the upper estimate is sharp.

		\vspace*{3mm}
	\subsection*{ Acknowledgement}
	The first author was partly supported by the Scientific Research Fund of Hunan Provincial Education Department under the number 25A0086,
		 NSF of China under the number 12371071 and number 12571081,  and the Key Project of NSF of Hunan Province under the number 2026JJ30002.
	
\end{document}